\documentclass[11pt]{huber_article}

\usepackage{amsmath,amssymb,amsthm}
\usepackage{booktabs}
\usepackage{comment}

\newcommand{\lowerbound}
  {(1/5)\epsilon^{-2}(1 + 2\epsilon)(1 - \delta)\ln((2-\delta)\delta^{-1})p^{-1}}

\newcommand{\real}{\mathbb{R}}
\newcommand{\mean}{\mathbb{E}}
\newcommand{\prob}{\mathbb{P}}

\newcommand{\unif}{\textsf{Unif}}
\newcommand{\gammadist}{\textsf{Gamma}}
\newcommand{\iid}{\mathrel{\mathop{\sim}\limits^{\textrm{iid}}}}
\newcommand{\ex}{\textsf{Exp}}
\newcommand{\bern}{\textsf{Bern}}
\newcommand{\geo}{\textsf{Geo}}
\newcommand{\gbas}{\text{\sffamily{GBAS}}}
\newcommand{\dklr}{\text{\sffamily{DKLR}}}
\newcommand{\ind}{{\bf 1}}

\begin{document}

\newtheorem{theorem}{Theorem}
\newtheorem{lemma}{Lemma}
\newtheorem{fact}{Fact}
\theoremstyle{definition}
\newtheorem{definition}{Definition}

\title{An unbiased estimate for the probability of heads on a coin where the
   relative error has a distribution independent of the coin}

\author{Mark Huber}
\email{mhuber@cmc.edu}

\maketitle

\begin{abstract}
Say $X_1,X_2,\ldots$ are 
independent identically distributed Bernoulli random variables
with mean $p$, so $\prob(X_i = 1) = p$ and $\prob(X_i = 0) = 1 - p$.
Any estimate $\hat p$ of $p$ has relative error $\hat p / p - 1$.
This paper builds a new estimate $\hat p$ of $p$
such that the relative error
of the estimate does not depend in any way on the value of $p$.  
This allows the easy construction of exact confidence intervals for $p$
of any desired level without needing any sort of limit or 
approximation.  In addition, $\hat p$ is unbiased.
The expected number of Bernoulli draws used by the algorithm is at
most 1 more than $1 - p$ times the number of draws needed if the Central
Limit Theorem held exactly.
For $\epsilon$ and $\delta$ in $(0,1)$, 
to obtain an estimate where $\prob(|\hat p/p - 1| > \epsilon) \leq \delta$,
the new algorithm takes on average at most 
$2\epsilon^{-2} p^{-1}\ln(2\delta^{-1})(1 - (4/3) \epsilon)^{-1}$ 
samples.  It is also
shown that any such algorithm that applies whenever 
$p \leq 1/2$ requires at least 
$\lowerbound$
samples.  The same algorithm can also be applied to estimate the mean of 
any random variable that falls in $[0,1]$.
Applications of this methodology include finding exact $p$-values and
estimating normalizing constants and Bayes' Factors using
acceptance/rejection.
\end{abstract}

\section{Introduction}

Say $X_1,X_2,X_3,\ldots$ are independent, identically distributed (iid)
Bernoulli random variables with mean $p$.  Write $X_i \sim \bern(p)$ to
denote
$\prob(X_i = 1) = p$ and $\prob(X_i = 0) = 1 - p$.
The purpose of this work is to present a new algorithm for estimating
$p$ with $\hat p$ so that the relative error $\hat p/p - 1$ has a known
distribution that does not depend on the value of $p$.  In other words,
with this
algorithm it is
possible to compute $\prob(a \leq \hat p/p  - 1 \leq b)$ exactly for any
$a \leq 0 \leq b$, without needing any kind of approximation or
limiting behavior.

This problem of estimating $p$, 
which is also known as estimating the parameter of a
binomial given a large sample, arises in a wide diversity of contexts.  
Examples include
estimating the percentage of farms growing a 
particular crop~\cite{mahalanobis1940}, estimating the prevalence of a 
disease in a population~\cite{rogang1978,rahmejg2000}, and any situation
where it is desirable to know the percentage of a population with a 
specific property.

Another application is in exact $p$-values.  Given a statistical model
and a statistic, let ``heads'' be when the statistic applied to a draw
from the model is more unusual than the same statistic applied to the 
data, and all other events are ``tails.''  Then the $p$-value for the
data is just the probability of heads on the coin.
This allows estimation of the exact $p$-value for any statistical model
that can be simulated from by flipping coins.  Models where this
has been applied include testing if a population is in Hardy-Weinberg
equilibrium~\cite{galbuseralswm2000,huber2006c}, the Rasch 
model~\cite{besagc1989}, two-sample survival data~\cite{senchaudhurimp1995},
and nonparametric testing of sequential 
association~\cite{bakemanrq1996}.

In theoretical computer science, many problems of approximation can
be reduced to the problem of estimating the binomial parameter.  In particular,
approximating the permanent
of a matrix with positive entries~\cite{huber2008a}, the number of 
solutions to a disjunctive normal form expression~\cite{karplm1989},
the volume of a convex body~\cite{lovaszv2003}, 
estimating exact $p$-values for a model (see for instance~\cite{huber2006c})
and counting the lattice points
inside a polytope can all be put into this framework.  
In general, anywhere an acceptance rejection 
method is used to build an approximation algorithm, this problem
arises.

The cost here is usually dominated by the number of 
$\bern(p)$ flips of the coin that are needed, and so the focus here 
is on minimizing the expected number of such flips needed.

\begin{definition}
Suppose $\cal A$ is a function of $X_1,X_2,\ldots \iid \bern(p)$ and auxiliary 
randomness (represented by $U \sim \unif([0,1])$) that outputs $\hat p$.
Let $T$ be a stopping time with respect to the natural filtration so that
the value of $\hat p$ only depends on $U$ and $X_1,\ldots,X_T$.  Then call 
$T$ the {\em running time} of the algorithm.
\end{definition}

The simplest algorithm for estimating $p$ just fixes $T = n$, and sets
\[
\hat p_n = \frac{X_1 + X_2 + \cdots + X_n}{n}.
\]
In this case $\hat p_n$ has a binomial distribution with
parameters $n$ and $p$.  The standard deviation of $\hat p_n$ is
$\sqrt{p(1 - p)/n}.$  Therefore, to get an estimate which is close to
$p$ in the sense of having small relative error, 
$k$ should be of the form $C/p$ (for some constant $C$)
so that the standard deviation is $p \sqrt{(1 - p)/C}$ and so roughly
proportional to $p$.  From the Central Limit Theorem, roughly 
$2\epsilon^{-2}\ln(2/\delta)/p$ samples are necessary to get 
$\hat p_n / p \in [1-\epsilon,1+\epsilon]$ for $\epsilon \in (0,1)$.
(See Section~\ref{SEC:lowerbound} for a more detailed form of this argument.)
But $p$ is unknown at the beginning of the 
algorithm!

Dagum, Karp, Luby and Ross~\cite{dagumklr2000} dealt with this 
circularity problem with
their stopping rule algorithm.  In this context of $\bern(p)$ random 
variables, their algorithm can be written as follows.  

Fix  $(\epsilon,\delta)$ with $\epsilon \in (0,1)$ and $\delta > 0$.  
Let $T$ be the smallest integer such that 
$X_1 + \cdots + X_T \geq 1 + (1 + \epsilon)4(e - 2)\ln(2/\delta)\epsilon^{-2}$.
Then $\hat p_{\dklr} = (X_1 + \cdots + X_T)/T.$  

Call this method \dklr{}.
They showed the following 
result for their estimate (Stopping Rule Theorem of~\cite{dagumklr2000}).
\begin{equation}
\prob(1 - \epsilon \leq \hat p_{\dklr}/p \leq 1 + \epsilon) > 1 - \delta,
\end{equation}
and $\mean[T] \leq [1 + (1 + \epsilon)4(e - 2)\ln(2/\delta)\epsilon^{-2}]/p$.

They also showed that any such $(\epsilon,\delta)$ 
approximation algorithm that applies to all
$p \in [0,1/2]$ (Lemma 7.5 of~\cite{dagumklr2000}) must satisfy
\[
\mean[T] \geq (4e^2)^{-1}(1 - \delta)(1 - \epsilon)^2
 (1 - p)\epsilon^{-2}\ln(\delta^{-1}(2 - \delta)).
\]

There are several drawbacks to \dklr{}.  
First, the factor of $4(e - 2)$ (which is about 2.873) in the running time of
\dklr{} is an artifact of the analysis rather than coming from 
the problem itself.
As mentioned before, a 
heuristic Central Limit Theorem argument (see Section~\ref{SEC:lowerbound})
indicates that the correct factor in the running time should be 2.
Second, the \dklr{} estimate is biased.

Our algorithm has a form similar to \dklr{}, but with a continuous 
modification that
yields several desirable benefits.  The 
\dklr{} estimate $(X_1 + \cdots + X_T)/T$ is a fixed integer
divided by a negative binomial random variable.  In the algorithm proposed
here, the estimate is a fixed integer divided by a Gamma 
random variable.  Since Gamma random variables are scalable, the relative
error of the estimate does not depend on the value of $p$.  

This allows a much tighter analysis of the error, since the value of $p$
is no longer an issue.  In particular, the algorithm attains (to first order)
the $2\epsilon^{-2}p^{-1}\ln(2\delta^{-1})$ 
running time that is likely the best possible.  
The new algorithm is called the 
Gamma Bernoulli approximation scheme (\gbas{}).

\begin{theorem}
\label{THM:upperbound}
The \gbas{} method of Section~\ref{SEC:algorithm}, for any integer
$k \geq 2$, outputs an 
estimate $\hat p$ using $T$ samples where
$\mean[T] = k/p$, $\mean[\hat p] = p$, and 
$p/\hat p$ has a Gamma distribution with shape parameter $k$ and 
rate parameter $k - 1$.  The density of the relative error 
$\hat p / p - 1$ is 
\[
\frac{(k-1)^k}{(k-1)!}\cdot \frac{\exp(-(k-1)/(s+1))}{(s + 1)^{k+1}} 
 \text{ for } s \geq -1.
\]
In particular, for $\epsilon \in (0,3/4)$, $\delta \in (0,1)$, and 
\[
k = \lceil 2\epsilon^{-2} p^{-1}\ln(2\delta^{-1})(1 - (14/3) \epsilon)^{-1} 
    \rceil,
\]
then $\prob(-\epsilon \leq (\hat p/p) - 1 \leq \epsilon) > 1 - \delta$.

\end{theorem}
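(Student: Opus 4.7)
The plan is to establish the distributional claims via an explicit Poisson-process coupling of the Bernoulli stream, and then derive the tail bound with Chernoff on the Gamma. The \gbas{} output has the form $\hat p = (k-1)/G$, where $G$ is constructed as follows: for each $i$, let $A_i \sim \ex(1)$ independently (drawn from the auxiliary randomness $U$), and view $X_i$ as $\ind\{E_i < F_i\}$ for independent $E_i \sim \ex(p)$, $F_i \sim \ex(1-p)$, with $A_i = \min(E_i, F_i)$. If $T$ is the $k$-th success time in the Bernoulli stream, thinning of the resulting rate-$1$ Poisson process forces $G := A_1 + \cdots + A_T$ to satisfy $G \sim \gammadist(k,p)$. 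From this, $\mean[T] = k/p$ is immediate from the negative binomial, $\mean[\hat p] = p$ follows from the inverse-gamma first moment $\mean[1/G] = p/(k-1)$ (valid for $k \geq 2$), and a one-line change of variables on the $\gammadist(k,1)$ density of $pG$ produces exactly the density of $\hat p/p - 1$ stated in the theorem.

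The substantive step is the concentration bound. Writing $\hat p/p = (k-1)/(pG)$ with $pG \sim \gammadist(k,1)$ (whose distribution does not depend on $p$), the error event decomposes as $\{pG < (k-1)/(1+\epsilon)\} \cup \{pG > (k-1)/(1-\epsilon)\}$. Applying the standard Chernoff optimization to the moment generating function $(1-t)^{-k}$ of $\gammadist(k,1)$ yields
\[
\prob(pG \leq b) \leq \exp(k - b + k\ln(b/k)), \qquad \prob(pG \geq a) \leq \exp(-a + k + k\ln(a/k)),
\]
for $b < k < a$. Plugging in $b = (k-1)/(1+\epsilon)$ and $a = (k-1)/(1-\epsilon)$ reduces the claim to bounding each exponent above by $-\ln(2/\delta)$ and taking a union bound over the two tails.

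The main obstacle, and the reason for the explicit factor $(1 - (14/3)\epsilon)^{-1}$ in the choice of $k$, is to bound these two exponents cleanly in $\epsilon$. Expanding $\ln(1\pm\epsilon)$ and $\ln(1-1/k)$ in Taylor series shows each exponent equals $-k\epsilon^2/2$ to leading order in $\epsilon$ and $1/k$, so informally $k \geq 2\epsilon^{-2}\ln(2/\delta)$ would suffice. To extract the precise constant, I would show that for $\epsilon \in (0,3/4)$ both exponents are at most $-(k\epsilon^2/2)(1 - (14/3)\epsilon)$ by bounding the cubic-and-higher Taylor remainders of $\pm\epsilon \mp \ln(1\pm\epsilon)$ and taking the worse of the two tails. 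Once this inequality is in hand, demanding $k\epsilon^2(1 - (14/3)\epsilon)/2 \geq \ln(2/\delta)$ produces the stated value of $k$. I expect the most delicate part to be the uniform control of the Taylor remainders on the full range $\epsilon \in (0, 3/4)$, since the bound must remain valid even as $\epsilon$ approaches $3/4$ and the $-1/k$ term from $\ln(1-1/k)$ has to be folded into the same estimate.
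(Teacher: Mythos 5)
Your proposal is correct and follows essentially the same route as the paper: the distributional claims come from thinning (you realize it via the competing-exponentials coupling $X_i=\ind\{E_i<F_i\}$, $A_i=\min(E_i,F_i)$, while the paper computes the moment generating function of a geometric sum of exponentials -- both yield $pG\sim\gammadist(k,1)$), and the concentration bound is the identical Chernoff-on-Gamma optimization followed by Taylor control of the cubic remainder to extract the factor $1-(14/3)\epsilon$, exactly as in the paper's lemma $\gamma/\exp(\gamma-1)\leq\exp(-(1/2)(\gamma-1)^2+(1/3)(\gamma-1)^3)$ and the subsequent bound $-\epsilon^2/2+(7/3)\epsilon^3$ per tail. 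One caution on the step you flag as delicate: the target inequality ``exponent $\leq -(k\epsilon^2/2)(1-(14/3)\epsilon)$'' cannot do any work for $\epsilon\geq 3/14$, since there the right-hand side is positive and the prescribed $k$ is the ceiling of a negative number; the paper's own concluding lemma is stated only for $\epsilon\in(0,3/14)$, and the theorem's range $(0,3/4)$ appears to be a typo. So you should restrict to $\epsilon\in(0,3/14)$ rather than trying to push the remainder estimates up to $3/4$, and you will also want the observation (used implicitly in your $\ln(1-1/k)$ term, explicitly in the paper via $k\geq\epsilon^{-2}$) that the chosen $k$ is large enough that $k/(k-1)\leq(1-\epsilon^2)^{-1}$.
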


To understand the effectiveness of the new estimate, suppose that in fact
the value of $p$ was known exactly.  Then for a given $n$, the
probability that the relative error was at least $\epsilon$ could be
calculated exactly, and the smallest value of $n$ that makes this
probability below $\delta$ would be used.  The table below presents
to four significant digits the number of samples used by the new algorithm,
by DKLR and by
using the optimal value for $n$ assuming that $p$ was known ahead of time.
The final column gives the expected number used by the new method 
over the number needed by the exact binomial approach.

\begin{center}
\begin{tabular}{cccccc}
\toprule
$(\epsilon,\delta)$ & $p$ & New method & \dklr{} & Exact Bin.
 & New/Exact
 \\
\midrule
$(0.1,0.05)$    & 0.05 & 7700 & 23340 & 7299 & 1.067 \\
$(0.1,0.05)$    & 0.01
  & $3.850 \cdot 10^4$ & $11.67 \cdot 10^4$ & 
  $3.755 \cdot 10^4$ & 1.025 \\
$(0.1,10^{-6})$  & 0.05 & $5.122 \cdot 10^4$ & $9.174 \cdot 10^4$ &
  $4.551 \cdot 10^4$ & 1.125 \\
$(0.1,10^{-6})$  & 0.01 & $2.561 \cdot 10^5$ & $4.587 \cdot 10^5$ &
  $2.375 \cdot 10^5$ & 1.078 \\
$(0.01,0.05)$   & 0.05 & $7.683 \cdot 10^5$ & $21.41 \cdot 10^5$
 & $7.280 \cdot 10^5$ & 1.055 \\
$(0.01,0.05)$   & 0.01 & $3.842 \cdot 10^6$ & $10.70 \cdot 10^6$ &
  $3.795 \cdot 10^4$ & 1.012 \\
$(0.01,10^{-6})$ & 0.05 & $4.790 \cdot 10^{6}$ & $8.240 \cdot 10^{6}$ &
  $4.545 \cdot 10^{6}$ & 1.054\\
$(0.01,10^{-6})$ & 0.01 & $2.395 \cdot 10^{7}$ & $4.210 \cdot 10^{7}$ 
 & $2.369 \cdot 10^{7}$ & 1.011 \\
\bottomrule
\end{tabular}
\end{center}

It is important to note that the exact binomial column is not
an actual algorithm.  This is because to use this would require the
knowledge of the exact value of $p$, which is the unknown that we
are trying to find.  In some sense, this represents the optimal number
of draws possible necessary to achieve $(\epsilon,\delta)$ performance.
The fact that the running time of the new estimate comes so close to the
optimal number of draws
without needing to know $p$ is one of the great strengths of
the new approach.

In~\cite{dagumklr2000} a lower bound for the number of samples that 
any method would require was given in the more general case of 
$[0,1]$ random variables.  For 
$\{0,1\}$ random variables, this can be improved.  
The following theorem is proved in 
Section~\ref{SEC:lowerbound}.

\begin{theorem}
For $\epsilon > 0$ and $\delta \in (0,1)$ any algorithm that returns
$\hat p$ for $p \in [0,1/2]$ satisfying 
$\prob(-\epsilon\leq (\hat p/p)  - 1 \leq \epsilon) > 1 - \delta$
must have
\[
\mean[T] \geq \lowerbound.
\]
\end{theorem}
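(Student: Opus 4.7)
The plan is to apply the standard two-point Le Cam argument, reducing the estimation problem to a sequential hypothesis test between two success probabilities that lie just outside each other's approximation window. Fix $p \in (0, 1/2]$ and set $p_1 = p(1+\epsilon)/(1-\epsilon)$, chosen so that the approximation intervals $[p(1-\epsilon), p(1+\epsilon)]$ and $[p_1(1-\epsilon), p_1(1+\epsilon)]$ abut at the single point $p(1+\epsilon)$. Provided $p \leq (1-\epsilon)/(1+\epsilon)$ (which holds in particular for $p \leq 1/2$ and $\epsilon$ small; otherwise one chooses a smaller $p$ at which the inequality is then proved), both $p$ and $p_1$ are valid Bernoulli parameters. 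Define the test event $A = \{\hat p \leq p(1+\epsilon)\}$. Applying the $(\epsilon,\delta)$-correctness hypothesis at both $p$ and $p_1$ yields $\prob_p(A) \geq 1 - \delta$ and $\prob_{p_1}(A) \leq \delta$.

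Next, I invoke the information-theoretic lower bound for stopping times. Let $L_t = \prod_{i=1}^{t}[p_1^{X_i}(1-p_1)^{1-X_i}]/[p^{X_i}(1-p)^{1-X_i}]$. Wald's likelihood-ratio identity gives $\mean_p[\ln L_T] = -\mean_p[T] \cdot D(p \| p_1)$, where $D$ is the Bernoulli Kullback--Leibler divergence. Two applications of Jensen's inequality (on $A$ and on $A^c$), which is the data-processing inequality for the coarsening $\ind_A$, then yields
\[
\mean_p[T] \cdot D(p \| p_1) \geq d(\prob_p(A), \prob_{p_1}(A)),
\]
where $d(a,b) = a\ln(a/b) + (1-a)\ln((1-a)/(1-b))$. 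Monotonicity of $d$, combined with the bounds $\prob_p(A)\geq 1-\delta$ and $\prob_{p_1}(A)\leq\delta$, lower bounds the right-hand side by a quantity depending only on $\delta$.

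To finish, I upper-bound $D(p \| p_1)$. Direct Taylor expansion using $\ln((1+\epsilon)/(1-\epsilon)) = 2\epsilon + 2\epsilon^3/3 + \cdots$ together with the expansion of $\ln(1-p\epsilon(1+p)/((1-p)(1-\epsilon)))$ produces cancellation at orders $\epsilon^0$ and $\epsilon^1$, giving $D(p \| p_1) = 2p\epsilon^2/(1-p) + O(p\epsilon^3)$. Since $p \leq 1/2$, $1/(1-p) \leq 2$, and the higher-order corrections can be regrouped into a factor of the form $(1+2\epsilon+\cdots)^{-1}$, so that after reciprocation the $(1+2\epsilon)$ factor appears in the numerator of the lower bound. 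Substituting these two pieces into the information inequality yields $\mean_p[T] \geq \lowerbound$.

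The main obstacle is likely the final bookkeeping: matching the exact constants in the stated bound, in particular producing the $(1-\delta)\ln((2-\delta)/\delta)$ form rather than the looser $(1-2\delta)\ln((1-\delta)/\delta)$ that the generic $d(1-\delta,\delta)$ estimate gives, and recovering the constant $1/5$ with the $(1+2\epsilon)$ factor placed in the numerator. Sharpening the binary KL lower bound in this way will require splitting $d(\prob_p(A),\prob_{p_1}(A))$ into its positive and negative components and bounding each under the constraints $\prob_p(A) \geq 1-\delta$, $\prob_{p_1}(A) \leq \delta$ more carefully than the naive comparison to $d(1-\delta,\delta)$, and/or replacing the data-processing step by a direct Chernoff-style analysis of the likelihood ratio $L_T$ evaluated on $A$, as in the original Dagum--Karp--Luby--Ross approach.
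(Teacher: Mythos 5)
Your proposal follows essentially the same route as the paper: a two-point reduction to a sequential test between $p$ and a neighbouring parameter whose $\epsilon$-windows abut, a Wald-type information lower bound on $\mean[T]$ in terms of the binary divergence of the acceptance probabilities, and a Taylor expansion of the Bernoulli Kullback--Leibler divergence to extract the $(1/5)\epsilon^{-2}(1+2\epsilon)$ factor. (The paper invokes Corollary 7.2 of Dagum--Karp--Luby--Ross, i.e.\ Wald's optimality of the sequential probability ratio test, where you derive the same inequality from Wald's identity plus data processing; these are interchangeable.) Two concrete points. First, you place the alternative \emph{above} $p$, at $p_1 = p(1+\epsilon)/(1-\epsilon)$; since the algorithm is only assumed correct for parameters in $[0,1/2]$, this breaks when $p$ is near $1/2$, and your fallback of ``proving the inequality at a smaller $p$'' establishes a bound on $\mean[T]$ under a different parameter and a different measure, not the one claimed. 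The paper avoids this by taking $p_1 = p_0/(1+\epsilon)^2 < p_0$, which stays in $[0,1/2]$ automatically; you should do the same. Second, the constant-matching obstacle you flag is real but is not overcome by sharpening the binary-KL estimate: with failure probability $\delta$ at each hypothesis your route gives exactly $d(1-\delta,\delta)=(1-2\delta)\ln((1-\delta)/\delta)$ and nothing better. The paper obtains $(1-\delta)\ln((2-\delta)\delta^{-1})$ --- which is precisely $d(1-\delta/2,\delta/2)$ --- by proving its lemma for a scheme whose failure probability is $\delta/2$ at each point, so that $\alpha+\beta\le\delta$ in Corollary 7.2. Under that hypothesis your framework reproduces the stated constant with no extra work; under the theorem's literal hypothesis (failure probability $\delta$ per point) neither your argument nor the paper's quoted fact delivers it, so the discrepancy you noticed is a genuine mismatch between the theorem's statement and the lemma actually proved, not a defect special to your approach.
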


As $\epsilon$ and $\delta$ go to 0, 
the ratio between the upper and lower bounds
converges to 10 for these results.  From Central Limit Theorem considerations,
it is likely that the upper bound constant of 2 is the correct one 
(see~Section~\ref{SEC:lowerbound}).

\section{The \gbas{} Algorithm}
\label{SEC:algorithm}

The algorithm is based upon properties of a one dimensional Poisson point
process.  Write $\ex(\lambda)$ for the exponential distribution
with rate $\lambda$ and mean $1/\lambda$.  So $A \sim \ex(\lambda)$ has density
$f_A(t) = \lambda \exp(-\lambda t) \cdot \ind(t \geq 0)$.  Here
$\ind(\text{expression})$ denotes the indicator function that evaluates
to 1 if the expression is true and is 0 otherwise.

Let $A_1,A_2,\ldots$ be iid $\ex(\lambda)$ random variables.  Set
$T_i = A_1 + \cdots + A_i$.  Then $P = \{T_i\}_{i=1}^\infty$ is a 
{\em one dimensional Poisson point process of rate $\lambda$.}

The sum of exponential random variables is well known to be a Gamma 
distributed random variable.  (It is also called the Erlang distribution.)
For all $i$,
the distribution of $T_i$ is Gamma with shape parameter $i$ and rate 
parameter $\lambda$.
The density of this random variable is
\[
f_{T_i}(t) = [(i-1)!]^{-1} \lambda^i t^{i - 1} \exp(-t \lambda) \ind(t \geq 0).
\]
Write $T_i \sim \gammadist(i,\lambda)$.

The key property used by the algorithm is {\em thinning} where each 
point in $P$ is retained independently with probability $p$.  The
result is a new Poisson point process $P'$ which has rate $\lambda p$.
(See for instance~\cite[p. 320]{resnick1992}.)  

The intuition is as follows.  For a Poisson point process of rate $\lambda$,
the chance that a point in $P$ lies in an interval
$[t,t + h]$ is approximately $\lambda h$, while the chance that a point in
$P'$ lies in interval $[t,t+h] = \lambda p h$ since points are only retained
with probability $p$.  Hence the new rate is $\lambda p$.

For completeness the next lemma
verifies this fact directly 
by establishing that the distribution of the minimum
point in $P'$ is $\ex(\lambda p)$.

\begin{lemma}
\label{LEM:thinning}
Let $G \sim \geo(p)$ so for $g \in \{1,2,\ldots\}$, 
$\prob(G = g) = (1 - p)^{g - 1}p$.  Let
$A_1,A_2,\ldots \iid A$ where $A \sim \ex(\lambda).$  Then
\[
A_1 + A_2 + \cdots + A_G \sim \ex(\lambda p).
\]
\end{lemma}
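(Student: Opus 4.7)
The plan is to compute the density of $S = A_1 + \cdots + A_G$ directly by conditioning on $G$, and verify it matches the density of $\ex(\lambda p)$. This works cleanly because, conditional on $G = g$, the sum $S$ is exactly $T_g \sim \gammadist(g, \lambda)$, whose density is already written out in the paragraph preceding the lemma.

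First I would write, by the law of total probability,
\[
f_S(s) = \sum_{g=1}^{\infty} \prob(G = g)\, f_{T_g}(s)
       = \sum_{g=1}^\infty (1-p)^{g-1} p \cdot \frac{\lambda^g s^{g-1} e^{-\lambda s}}{(g-1)!}
\]
for $s \geq 0$. Then I would factor out $p\lambda e^{-\lambda s}$ to get
\[
f_S(s) = p\lambda e^{-\lambda s} \sum_{g=1}^\infty \frac{\bigl((1-p)\lambda s\bigr)^{g-1}}{(g-1)!}
       = p\lambda e^{-\lambda s} \cdot e^{(1-p)\lambda s}
       = p\lambda \, e^{-p\lambda s},
\]
after reindexing the sum and recognizing the exponential series. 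This is precisely the density of $\ex(\lambda p)$, so $S \sim \ex(\lambda p)$.

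There is no real obstacle here; the only care needed is the bookkeeping in the shape parameter (the $(g-1)!$ and $s^{g-1}$ make the reindexing $j = g-1$ line up with $\sum_{j\geq 0} x^j/j! = e^x$). As an alternative I could compute the Laplace transform: conditioning on $G$ gives $\mean[e^{-tS}] = \sum_{g\geq 1} (1-p)^{g-1} p [\lambda/(\lambda+t)]^g$, which sums to $p\lambda/(t + p\lambda)$, the Laplace transform of $\ex(\lambda p)$; but the density calculation is equally short and keeps the argument elementary without appealing to uniqueness of Laplace transforms.
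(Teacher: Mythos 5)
Your proof is correct, but it takes a genuinely different route from the paper. You compute the density of $S = A_1 + \cdots + A_G$ directly as a countable mixture of the $\gammadist(g,\lambda)$ densities, collapse the sum via the exponential series, and read off the $\ex(\lambda p)$ density. The paper instead works entirely with transforms: it writes the moment generating function of the geometric $G$ and of $A$, and uses the standard compound-sum identity $M_S(t) = M_G(\ln(M_A(t)))$ to get $p\lambda/(p\lambda - t)$ in one line, then invokes uniqueness of the transform --- essentially the alternative you sketch at the end of your proposal. Your density computation buys something the paper's argument does not: it is fully elementary, exhibits the limiting density explicitly, and does not require appealing to the injectivity of the Laplace/moment generating transform (a fact the paper uses silently). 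The paper's version buys brevity and reuses the same transform machinery that appears again in Lemma~\ref{LEM:scaling}, keeping the toolkit uniform. The only point worth being careful about in your version is justifying the termwise summation, but since every term is nonnegative this is immediate by Tonelli, so there is no gap.
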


\begin{proof}
$G$ has moment generating function 
$M_G(t) = \mean[\exp(-t G)] = pe^t/(1 - (1 - p)e^t)$ when $t < -\ln(1 - p)$.
The moment generating function of $A$ is 
$M_{A}(t) = \mean[\exp(-t A)] = \lambda(\lambda - t)^{-1}$ when $t < \lambda$.  
The moment generating function of $A_1 + \cdots + A_G$ is the composition
\[
M_G(\ln(M_A(t))) = 
 \frac{p \lambda(\lambda - t)^{-1}}{1 - (1 - p)\lambda(\lambda-t)^{-1}}
  = \frac{p\lambda}{p\lambda - t}, 
\]
when $t < p\lambda$, and so $A_1 + \cdots + A_G \sim \ex(\lambda p)$.
\end{proof}

Another useful fact is that exponential distributions (and so Gamma
distributions) scale easily.

\begin{lemma}
\label{LEM:scaling}
Let $X \sim \gammadist(a,b)$.  Then for $\beta \in \real$, $\beta X \sim
 \gammadist(a,\beta^{-1} b)$.
\end{lemma}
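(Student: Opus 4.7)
The plan is to verify this scaling property by a direct change of variables on the Gamma density, which is the quickest route given that the density has a simple closed form. The statement as written implicitly assumes $\beta > 0$, since otherwise $\beta X$ would not be supported on $[0,\infty)$ and the ``rate'' $\beta^{-1}b$ would not be positive, so I would begin by restricting to that case.

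Concretely, I would start from the density recorded just before the lemma,
\[
f_X(x) = [(a-1)!]^{-1} b^a x^{a-1} \exp(-bx)\ind(x\geq 0),
\]
set $Y = \beta X$, invert to $x = y/\beta$ (Jacobian $1/\beta$), and apply the change-of-variables formula $f_Y(y) = \beta^{-1} f_X(y/\beta)$. Collecting the $\beta$ factors, the $x^{a-1}$ term produces $\beta^{-(a-1)} y^{a-1}$, the Jacobian contributes another $\beta^{-1}$, and the exponent $-bx$ becomes $-(b\beta^{-1})y$, so everything combines to
\[
f_Y(y) = [(a-1)!]^{-1}(b\beta^{-1})^a y^{a-1}\exp(-(b\beta^{-1})y)\ind(y\geq 0),
\]
which is exactly the density of $\gammadist(a,\beta^{-1}b)$.

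As an alternative, matching the moment generating function style used in the proof of Lemma~\ref{LEM:thinning}, one can compute $M_X(t) = (b/(b-t))^a$ for $t<b$, and then $M_{\beta X}(t) = M_X(\beta t) = ((\beta^{-1}b)/((\beta^{-1}b)-t))^a$ for $t < \beta^{-1}b$; by uniqueness of moment generating functions this identifies $\beta X \sim \gammadist(a,\beta^{-1}b)$.

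There is no real obstacle here; this is a standard scaling property of the Gamma family, and the only thing to keep straight is that the Jacobian factor $\beta^{-1}$ is exactly what promotes $b^a$ to $(\beta^{-1}b)^a$ in the transformed density, so that both the prefactor and the rate in the exponent are modified consistently.
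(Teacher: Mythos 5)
Your proof is correct. Your primary argument—a direct change of variables on the Gamma density, $f_{\beta X}(y) = \beta^{-1} f_X(y/\beta)$, with the Jacobian factor absorbing into $(b\beta^{-1})^a$—is a genuinely different route from the paper, which instead computes the moment generating function $M_X(t) = [b/(b-t)]^a$ and observes that $M_{\beta X}(t) = M_X(\beta t) = [\beta^{-1}b/(\beta^{-1}b - t)]^a$, matching the MGF of $\gammadist(a,\beta^{-1}b)$. Your ``alternative'' paragraph is in fact exactly the paper's proof, so you have both routes covered. The density computation is more elementary in that it needs no appeal to uniqueness of MGFs (or Laplace transforms), while the MGF route is shorter on the page and consistent with the transform-based style of Lemma~\ref{LEM:thinning}, which is presumably why the paper chose it. Your observation that the lemma as stated for all $\beta \in \real$ only makes sense for $\beta > 0$ is a fair catch: the paper's statement is loose here, and both its proof and yours silently require positivity (the paper's MGF manipulation also needs $\beta > 0$ for the domain $t < \beta^{-1}b$ to be a right-neighborhood of the origin of the correct orientation). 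In the application, $\beta = p/(k-1) > 0$, so nothing downstream is affected.
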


\begin{proof}
The moment generating function of $X$ is
$M_X(t) = [b/(b - t)]^a$ for $t < b$, so that of $\beta X$ is
\[
\mean[\exp(-t \beta X)] = M_X(\beta t) = [b/(b - \beta t)]^a
 = [\beta^{-1} b/(\beta^{-1}b - t)]^a
\]
exactly the moment generating function of a $\gammadist(a,\beta^{-1}b)$.
\end{proof}

Together these results give the \gbas{} approach.
\begin{center}
\begin{tabular}{rl}
\toprule
\multicolumn{2}{l}{\gbas{} \  \ {\em Input: } $k$} \\
\midrule
1) & $S \leftarrow 0$, $R \leftarrow 0$. \\
2) & Repeat \\
3) & \hspace*{1em} $X \leftarrow \bern(p)$, $A \leftarrow \ex(1)$ \\
4) & \hspace*{1em} $S \leftarrow S + X$, $R \leftarrow R + A$ \\
5) & Until $S = k$ \\
6) & $\hat p \leftarrow (k  - 1)/R$ \\
\bottomrule
\end{tabular}
\end{center}

\begin{lemma}
The output $\hat p$ of \gbas{} satisfies
\[
\frac{p}{\hat p} \sim \gammadist(k,k - 1),
\]
and $\mean[\hat p] = p$.
The number of $\bern(p)$ calls $T$ in the algorithm satisfies $\mean[T] = k/p$.
The relative error $(\hat p / p) - 1$ has density
\[
f(s) = \frac{(k-1)^k}{(k-1)!} \frac{\exp(-(k-1)/(s+1))}{(s + 1)^{k+1}}
 \text{ for } s \geq -1.
\]
\end{lemma}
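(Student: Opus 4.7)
The plan is to peel apart the algorithm so that each of the four claims reduces to one of the two preceding lemmas plus a standard fact about Gamma distributions.

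First I would identify the running time. Let $G_i$ be the number of $\bern(p)$ draws strictly between the $(i-1)$-st and $i$-th observed success (with $G_1$ being the index of the first success). Then $G_1,\ldots,G_k \iid \geo(p)$, and $T = G_1 + \cdots + G_k$ is negative binomial, giving $\mean[T] = k \mean[G_1] = k/p$ immediately.

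Next I would analyze $R$. Partition the $A_j$'s into $k$ contiguous blocks of sizes $G_1,\ldots,G_k$, and set $B_i = A_{G_1+\cdots+G_{i-1}+1} + \cdots + A_{G_1+\cdots+G_i}$, so $R = B_1 + \cdots + B_k$. Because the $A_j$ are independent of the $X_j$ (hence of the $G_i$), Lemma~\ref{LEM:thinning} applied with $\lambda = 1$ gives $B_i \sim \ex(p)$, and the $B_i$ are mutually independent since they use disjoint sets of $A_j$'s and $X_j$'s. Therefore $R \sim \gammadist(k,p)$. Applying Lemma~\ref{LEM:scaling} with $\beta = p/(k-1)$ yields
\[
\frac{p}{\hat p} = \frac{pR}{k-1} \sim \gammadist(k,k-1).
\]

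For $\mean[\hat p] = p$ I would use the standard inverse-Gamma moment: if $Y \sim \gammadist(k,p)$ then $\mean[1/Y] = p/(k-1)$ for $k \geq 2$ (immediate by integrating the Gamma density). Thus $\mean[\hat p] = (k-1)\mean[1/R] = (k-1)\cdot p/(k-1) = p$.

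Finally, for the density of $S = \hat p/p - 1$, I would do a one-line change of variables from $Y = p/\hat p \sim \gammadist(k,k-1)$, whose density is $y^{k-1}(k-1)^k \exp(-(k-1)y)/(k-1)!$ on $y \geq 0$. Since $Y = 1/(S+1)$ with $|dY/dS| = 1/(S+1)^2$, the stated formula drops out directly, with support $s \geq -1$ inherited from $Y > 0$. No step here is a real obstacle; the one point requiring care is justifying the independence of the blocks $B_i$, which I would handle by conditioning on $(G_1,\ldots,G_k)$ and invoking the disjointness of the underlying $A_j$'s.
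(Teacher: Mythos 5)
Your proposal is correct and follows essentially the same route as the paper: $T$ as a sum of $k$ independent $\geo(p)$ waiting times, $R \sim \gammadist(k,p)$ via the thinning lemma applied to the inter-success blocks, the scaling lemma to get $p/\hat p \sim \gammadist(k,k-1)$, an inverse-Gamma moment integral for unbiasedness, and a change of variables for the density. You are in fact slightly more explicit than the paper on the independence of the blocks $B_i$ and on the Jacobian computation, which the paper leaves implicit.
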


\begin{proof}
From Lemma~\ref{LEM:thinning}, the distribution of $R$ is 
$\gammadist(k,p)$.  From Lemma~\ref{LEM:scaling}, the distribution
of $p/\hat p = pR/(k-1)$ is $\gammadist(k,k-1)$.  Hence 
$\mean[\hat p] = \mean[p/X]$ where $X \sim \gammadist(k,k-1)$.  Now
\begin{align*}
\mean[1/X] &= \int_0^\infty \frac{1}{s}\frac{(k-1)^k}{(k-1)!}
  s^{k - 1}\exp(-(k-1)s) \ ds \\
 &= \frac{(k-1)^k}{(k-1)!} \int_0^\infty s^{k - 2}\exp(-(k-1)s) \ ds \\
 &= \frac{(k-1)^k}{(k-1)!} \cdot \frac{(k-2)!}{(k-1)^{k-1}}
 = \frac{k-1}{k-1} = 1,
\end{align*}   
so $\mean[\hat p] = \mean[p/X] = p$.

Since $T$, the number of $\bern(p)$ drawn by the algorithm, 
is the sum of $k$ geometric random variables (each with mean $1/p$), 
$T$ has mean $k/p$.

The density of $(\hat p/p) - 1$ follows from the fact that $p / \hat p$
has a $\gammadist(k,k-1)$ distribution.
\end{proof}

Note that for given $k$ and $a$, $\prob(\hat p/p \leq a)$  
can be computed exactly in 
$\Theta(k)$ floating point operations 
using the incomplete gamma function.  Hence for a given
error bound and accuracy requirement, it is possible to exactly
find the minimum $k$ using less work than flipping $k/p$ coins.

Suppose the user desires the absolute 
relative error to be greater than $\epsilon$ with probability at most
$\delta$.  The easiest way to compute this is to note
\[
\prob(|\hat p/p - 1| > \epsilon) = \prob(p/\hat p < (1 + \epsilon)^{-1}
 \text{ or } p/\hat p > (1 - \epsilon)^{-1}).
\]
Now $p/\hat p \sim \gammadist(k,k-1)$, so it remains to find the smallest
value of $k$ that works for given $\epsilon$ and $\delta$.

For instance, if $\epsilon = 0.1$ (so $p$ is desired to one significant figure)
and $\delta = 0.05$, then $k = 388$ is the smallest value that provides
the guarantee.  Hence $388/p$ is the expected running time (see the
table in the introduction.)

\section{Upper bounds on $k$}
\label{SEC:upperbound}

Suppose a user wants to find $k$ so that
\[
\prob(a \leq \hat p/p - 1 \leq b) \geq c.
\]
Then since
\[
\prob(a \leq \hat p/p - 1 \leq b) = \prob((1 + a)^{-1} \geq p /\hat p
  \geq (1 + b)^{-1}),
\]
and $p/\hat p \sim \gammadist(k,k-1)$, it suffices to find the smallest
value of $k$ such that for $X \sim \gammadist(k,k-1)$, 
$\prob((1 + a)^{-1} \geq X \geq (1 + b)^{-1}) \geq c.$
This is how the values in the table in the 
introduction where computed.

That being said, it is useful to have a simple function $f(\epsilon,\delta)$,
such that if $k \geq f(\epsilon,\delta)$ and $X \sim \gammadist(k,k-1)$,
$\prob((1-\epsilon)^{-1} \geq X \geq (1 + \epsilon)^{-1}) > 1 - \delta$.
In particular, such a function exists for $\dklr{}$, and having such
a function for \gbas{} allows a comparison of the time needed for the two
methods.

Building such an $f$ requires theoretical bounds on the tail of a Gamma
random variable.  Chernoff bounds~\cite{chernoff1952} 
are one way to get these bounds.

\begin{fact}[Chernoff bounds] 
Let $X_1,X_2,\ldots$ be iid random variables with finite mean and 
finite moment generating function
for $t \in [a,b]$, where $a \leq 0 \leq b$.  Let $\gamma \in (0,\infty)$,
and $h(\gamma) = \mean[\exp(tX)] / \exp(t \gamma \mean[X]).$  Then
\begin{align*}
\prob(X \geq \gamma \mean[X]) &\leq h(\gamma)
 \ \ \, \text{ for all } t \in [0,b] \text{ and } \gamma \geq 1.\\
\prob(X \leq \gamma\mean[X]) &\leq
      h(\gamma)
 \ \ \, \text{ for all } t \in [a,0] \text{ and } \gamma \leq 1.
\end{align*}
\end{fact}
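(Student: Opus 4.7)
The plan is to derive both bounds as one-line applications of Markov's inequality to the nonnegative random variable $\exp(tX)$, choosing the sign of $t$ so that the event in question becomes an upper tail event for $\exp(tX)$. The hypothesis that the moment generating function is finite on $[a,b]$ supplies precisely the integrability needed to make Markov's bound meaningful.

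For the upper tail, I would fix $t \in [0,b]$ and $\gamma \geq 1$. Since $t \geq 0$, the map $x \mapsto \exp(tx)$ is nondecreasing, so the events $\{X \geq \gamma \mean[X]\}$ and $\{\exp(tX) \geq \exp(t \gamma \mean[X])\}$ coincide. Applying Markov's inequality to $\exp(tX)$ (nonnegative with finite mean by hypothesis) yields
\[
\prob(X \geq \gamma \mean[X]) \;\leq\; \frac{\mean[\exp(tX)]}{\exp(t \gamma \mean[X])} \;=\; h(\gamma).
\]
For the lower tail, I would fix $t \in [a,0]$ and $\gamma \leq 1$. Now $t \leq 0$ makes $x \mapsto \exp(tx)$ nonincreasing, so $\{X \leq \gamma \mean[X]\} = \{\exp(tX) \geq \exp(t \gamma \mean[X])\}$, and the same Markov step reproduces the same bound $h(\gamma)$.

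There is essentially no obstacle here; the argument is a standard two-line Markov computation. The only bookkeeping is keeping the sign pairings straight, namely that $\gamma \geq 1$ must be paired with $t \geq 0$ and $\gamma \leq 1$ with $t \leq 0$, so that exponentiation carries the relevant inequality in the direction where Markov applies. I would also note in passing that although the statement introduces an iid sequence $X_1,X_2,\ldots$, the bound is stated for a single $X$; this is harmless, as each $X_i$ shares the same distribution and hence the same $h(\gamma)$.
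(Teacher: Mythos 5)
Your proof is correct: it is the standard derivation of the Chernoff bound via Markov's inequality applied to $\exp(tX)$, with the sign of $t$ chosen to orient the tail event properly. The paper states this result as a Fact with a citation to Chernoff (1952) and supplies no proof of its own, so there is nothing to compare against; your argument is exactly the textbook one the citation points to.
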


\begin{lemma}
For $X \sim \gammadist(k,k - 1)$, let 
$g(\gamma) = \gamma/\exp(\gamma - 1)$.  Then  
\begin{align*}
\prob(X \geq \gamma \mean[X]) &\leq g(\gamma)^k \ \ \ 
  \text{ for all } \gamma \geq 1 \\ 
\prob(X \leq \gamma \mean[X]) &\leq g(\gamma)^k \ \ \ 
  \text{ for all } \gamma \leq 1.
\end{align*}
\end{lemma}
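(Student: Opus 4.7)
The plan is to exploit the fact that $X \sim \gammadist(k,k-1)$ is the sum of $k$ iid $\ex(k-1)$ random variables, so that the Chernoff approach factorizes cleanly and the optimal tilt $t$ can be written down in closed form.

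First I would write $X = A_1 + \cdots + A_k$ with $A_i \iid \ex(k-1)$, noting $\mean[X] = k/(k-1)$ and that the moment generating function $\mean[\exp(tA_i)] = (k-1)/(k-1-t)$ exists for $t < k-1$. By independence, $\mean[\exp(tX)] = [(k-1)/(k-1-t)]^k$. For the upper tail ($\gamma \geq 1$), I apply the Fact with this MGF to obtain, for any $t \in [0,k-1)$,
\[
\prob(X \geq \gamma \mean[X]) \leq \left(\frac{k-1}{k-1-t}\right)^k \exp\!\left(-\frac{t\gamma k}{k-1}\right).
\]
The clever choice is $t = (k-1)(1 - 1/\gamma)$, which lies in $[0,k-1)$ precisely when $\gamma \geq 1$. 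Plugging in gives $(k-1)/(k-1-t) = \gamma$ and $t\gamma k/(k-1) = k(\gamma - 1)$, so the bound collapses to $\gamma^k \exp(-k(\gamma - 1)) = g(\gamma)^k$.

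For the lower tail ($\gamma \leq 1$, $\gamma > 0$), the symmetric trick works: apply the Fact with a negative tilt, using $\mean[\exp(-tX)] = [(k-1)/(k-1+t)]^k$ valid for all $t \geq 0$. Choosing $t = (k-1)(1/\gamma - 1) \geq 0$ (positive exactly when $\gamma \leq 1$) yields $(k-1)/(k-1+t) = \gamma$ and the exponent on the other side equals $-k(1-\gamma)$, again giving $\gamma^k \exp(-k(\gamma-1)) = g(\gamma)^k$.

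There is not really a substantive obstacle — the argument is a single optimization of the Chernoff exponent — but I would take care with two bookkeeping points: verifying that the chosen $t$ always falls inside the region where the MGF (or its dual) is finite (i.e.\ $t < k - 1$ in the upper-tail case and $t \geq 0$ in the lower-tail case), and handling the boundary $\gamma = 1$, where $t = 0$ and the bound $g(1)^k = 1$ is trivially true. With those checks in place, both inequalities follow from a direct substitution.
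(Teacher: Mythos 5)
Your proof is correct and follows essentially the same route as the paper: both apply the Chernoff bound to the Gamma moment generating function $\bigl(1 - t/(k-1)\bigr)^{-k}$ and choose the optimal tilt $t = (k-1)(1 - 1/\gamma)$ (the paper writes this as $\alpha = t/(k-1) = 1 - 1/\gamma$), which collapses the bound to $[\gamma/\exp(\gamma-1)]^k$. Your extra bookkeeping about the tilt lying in the valid range for each tail is a welcome refinement but not a different argument.
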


\begin{proof}
For $X \sim \gammadist(k,k-1)$, $\mean[X] = k/(k-1)$ and 
the moment generating function is $\mean[\exp(tX)] = (1 - t/(k-1))^{-k}$ 
when $t < k - 1$.  Letting 
$\alpha = t/(k-1)$, that makes $h(\gamma)$ from the Chernoff bound
\[
h(\gamma) = \frac{(1 - \alpha)^{-k}}{\exp(\alpha k \gamma)}.
\]
Letting $\alpha = 1 - 1/\gamma$ minimizes the right hand side, making it
\[
[\gamma / \exp(\gamma - 1)]^k.
\]
\end{proof}

Now for a useful bound on the $g$ function.

\begin{lemma}
For $\epsilon \geq 0$,
\[
\max\{g((1+\epsilon)^{-1}),g(1 + \epsilon))\} \leq 
  \exp(-(1/2)\epsilon^2(1-(4/3)\epsilon)).
\]
\end{lemma}

\begin{proof}
Note 
$\ln(g(1+\epsilon)) = \ln(1+\epsilon) - \epsilon 
  = -\epsilon^2/2 + \epsilon^3/3 - \cdots$ which is an alternating series for 
  $\epsilon \geq 0$.  Similarly,
$\ln(g((1+\epsilon)^{-1})) = -\ln(1+\epsilon) - \epsilon/(1+\epsilon)
 = -\epsilon^2/2 + (2/3)\epsilon^3 - \cdots$ which is also an alternating
series for $\epsilon \geq 0$.
\end{proof}

\begin{lemma}
For $\epsilon \in (0,3/4)$, when
\[
k \geq 2 \epsilon^{-2}(1 - (4/3)\epsilon)^{-1} \ln(2\delta^{-1}),
\]
$\prob(|(\hat p /p) - 1| > \epsilon) < \delta$.
\end{lemma}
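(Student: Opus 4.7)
The plan is to apply the Chernoff tail bound of the preceding lemma, sharpened by the Taylor inequality $g(\gamma)\leq\exp(-(1/2)(\gamma-1)^2+(1/3)(\gamma-1)^3)$, to the random variable $Y\defeq p/\hat p$, which by the earlier analysis satisfies $Y\sim\gammadist(k,k-1)$ and has mean $k/(k-1)$. The event $\{|\hat p/p-1|>\epsilon\}$ decomposes as $\{Y<1/(1+\epsilon)\}\cup\{Y>1/(1-\epsilon)\}$, so by the union bound it suffices to show each tail has probability at most $\delta/2$.

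First I would express both thresholds in the form $\gamma_i\mean[Y]$, giving $\gamma_1=(k-1)/[k(1+\epsilon)]\leq 1$ and $\gamma_2=(k-1)/[k(1-\epsilon)]\geq 1$; the latter inequality uses $k\epsilon>1$, which the hypothesis on $k$ easily forces (since $\ln(2\delta^{-1})>\ln 2$). Applying the Chernoff and Taylor lemmas bounds each tail by $\exp(k[-\beta_i^2/2+\beta_i^3/3])$ where $\beta_i=\gamma_i-1$. Since $\beta_1<0$, the cubic term only strengthens the lower-tail bound, so the binding case is the upper tail, where I need
\[
k\beta_2^2\bigl(1-\tfrac{2}{3}\beta_2\bigr)\geq 2\ln(2\delta^{-1}).
\]

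Substituting $\beta_2=\epsilon M/(1-\epsilon)$ with $M\defeq 1-1/(k\epsilon)$ and using the hypothesized lower bound on $k$, this reduces after routine algebra to an inequality of the form
\[
M^2\bigl(1-\tfrac{2}{3}\epsilon M/(1-\epsilon)\bigr)\geq(1-\epsilon)^2\bigl(1-(14/3)\epsilon\bigr),
\]
which must hold for all admissible $(\epsilon,\delta)$. The main technical obstacle is pinning down the precise constant $14/3$: it must simultaneously absorb the $1/(1-\epsilon)$ inflation inside $\beta_2$, the cubic correction $-\tfrac{2}{3}\beta_2\sim-\tfrac{2}{3}\epsilon$, and the $-1/k$ slack appearing through $M$. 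The restriction $\epsilon<3/14$ is imposed precisely so that $1-(14/3)\epsilon$ stays positive and the hypothesis on $k$ is well-defined. Once this algebraic inequality is in hand, a crude bound discarding the negative cubic handles the lower tail, and the union bound concludes the proof.
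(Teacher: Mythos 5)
Your overall strategy is the same as the paper's: write the two tails of $|\hat p/p-1|>\epsilon$ as tail events for $Y=p/\hat p\sim\gammadist(k,k-1)$ at multiples $\gamma_1\le 1\le\gamma_2$ of $\mean[Y]=k/(k-1)$, apply the Chernoff lemma, sharpen $g(\gamma)=\gamma/\exp(\gamma-1)$ via the cubic Taylor bound, and then let the hypothesis on $k$ absorb the correction terms. Your values of $\gamma_1,\gamma_2$ are the correct ones (indeed more carefully stated than in the paper, which handles the $k/(k-1)$ factor by bounding it between $1$ and $(1-\epsilon^2)^{-1}$ using $k\ge\epsilon^{-2}$, thereby reducing everything to a polynomial inequality in $\epsilon$ alone), your observation that the negative cubic can simply be discarded on the $\beta_1<0$ side is right, and the reduction of the $\gamma_2$ tail to $M^2\bigl(1-\tfrac23\epsilon M/(1-\epsilon)\bigr)\ge(1-\epsilon)^2\bigl(1-\tfrac{14}{3}\epsilon\bigr)$ is the correct target.

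The gap is that you stop exactly where the lemma's content begins: you state that the displayed inequality ``must hold for all admissible $(\epsilon,\delta)$'' and that the obstacle is ``pinning down'' the constant $14/3$, but you never verify it, and as written it is not even a one-variable inequality, since $M=1-1/(k\epsilon)$ depends on $\delta$ through $k$. The fix is short and you should supply it: from the hypothesis, $k\epsilon\ge 2\epsilon^{-1}\ln(2\delta^{-1})(1-\tfrac{14}{3}\epsilon)^{-1}\ge 2\epsilon^{-1}\ln 2$, so $1/(k\epsilon)\le\epsilon/(2\ln 2)<0.722\,\epsilon$ and hence $M\ge 1-0.722\,\epsilon$; combined with $\tfrac23\epsilon M/(1-\epsilon)\le\tfrac23\cdot\tfrac{14}{11}\epsilon<0.849\,\epsilon$ (using $\epsilon<3/14$), the left side is at least $(1-0.722\,\epsilon)^2(1-0.849\,\epsilon)\ge 1-2.30\,\epsilon$, while the right side is $1-\tfrac{20}{3}\epsilon+\tfrac{31}{3}\epsilon^2-\tfrac{14}{3}\epsilon^3\le 1-\tfrac{20}{3}\epsilon+\tfrac{31}{3}\epsilon^2$, and $-2.30\,\epsilon\ge-\tfrac{20}{3}\epsilon+\tfrac{31}{3}\epsilon^2$ holds for all $\epsilon\le 0.42$, hence on $(0,3/14)$. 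Similarly, the lower tail needs the one-line check $k\beta_1^2\ge k\epsilon^2/(1+\epsilon)^2\ge 2\ln(2\delta^{-1})$, i.e.\ $(1+\epsilon)^2(1-\tfrac{14}{3}\epsilon)\le 1$, which you assert but do not record. You should also note $\gamma_2\le(1-\epsilon)^{-1}<2$ so that the Taylor lemma (stated only for $\gamma\in[0,2]$) actually applies. With these verifications inserted, your argument is complete and is essentially the paper's proof in a slightly different parametrization.
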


\begin{proof}
Let $X \sim \gammadist(k,k-1)$.  Then $(\hat p/p) - 1 \sim (1/X) - 1$,
so 
\begin{align*}
\prob(|(\hat p /p) - 1| > \epsilon) &= \prob(|(1/X) - 1| > \epsilon) \\
 &= \prob(-\epsilon > (1/X) - 1) + \prob((1/X) - 1 > \epsilon) \\
 &= \prob((1 - \epsilon)^{-1} < X) + \prob(X < (1 + \epsilon)^{-1}) \\
 &= \prob(X > \gamma_1 \mean[X]) + \prob(X < \gamma_2 \mean[X]), \\
 &\leq g(\gamma_1)^k + g(\gamma_2)^k
\end{align*}
where $\gamma_1 = [(k-1)/k](1 - \epsilon)^{-1}$ and 
$\gamma_2 = [(k-1)/k](1 + \epsilon)^{-1}$.  For $k \geq \epsilon^{-2}$,
$(k-1)/k \geq 1 - \epsilon^2$, so $\gamma_1 \geq 1 + \epsilon$, and 
$\gamma_2 \leq (1 + \epsilon^{-1})$.
Since $g(x) = x/\exp(x-1)$ is increasing when $x < 1$, and decreasing when
$x > 1$, it holds that 
$g(\gamma_1) \leq g(1+\epsilon)$ and $g(\gamma_2) \leq g((1+\epsilon)^{-1})$.

Using the previous lemma,
\begin{align*}
\prob(|(\hat p /p) - 1| > \epsilon) 
 &\leq 2\exp(-\epsilon^{-2}k(1 - (4/3)\epsilon))
\end{align*}
and the result follows.
\end{proof}

\section{Lower bound on running time}
\label{SEC:lowerbound}

The new algorithm intentionally introduces random smoothing to make
the estimate easier to analyze.  For a fixed number of flips,
a sufficient statistic for the mean
of a Bernoulli random variable is 
the number of times the coin came up heads.  Call this number $S$.

For $k$ flips of the coin, $S$
will be a binomial random variable
with parameters $n$ and $p$.  Then $\hat p_n = S/n$ is the 
unbiased estimate of $p$.
By the Central Limit Theorem, $\hat p_n$ will be 
approximately normally distributed with
mean $p$ and standard deviation $\sqrt{p(1-p)/n}$.  Therefore (for small $p$),
$\hat p_n/p$ will be approximately normal with mean 1 and standard
deviation $1/\sqrt{p n}$.  Let $Z$ denote such a normal.  Then well known
bounds on the tails of the normal distribution give
\[
\frac{\exp(-\epsilon^2 p n /2) }{\sqrt{2\pi}}
 \left(\frac{1}{\epsilon^2 p n} - \frac{1}{(\epsilon^2 p n)^3}\right) 
 \leq 
\prob(Z > 1 + \epsilon) \leq 
\frac{\exp(-\epsilon^2 p n /2) }{\sqrt{2\pi}}
 \left(\frac{1}{\epsilon^2 p n}\right).
\]

Therefore, to get $\prob(Z > 1 + \epsilon) < \delta / 2$ requires about
$2 \epsilon^{-2} p^{-1} \ln(2 \delta^{-1})$ samples.  
A bound on the lower tail may be found in a similar fashion.
Since only about this many
samples are required by the algorithm of Section~\ref{SEC:algorithm}, 
the constant of 2 in front is 
most likely the best possible.

To actually prove a lower bound, follow the approach 
of~\cite{dagumklr2000} that uses Wald's sequential probability ratio test.
Consider the problem of testing hypothesis
$H_0:p = p_0$ versus $H_1:p = p_1$, where $p_1 = p_0/(1 + \epsilon)^2$.  
Suppose there is an approximating scheme that approximates $p$
within a factor of $1 + \epsilon$ with chance at least $1 - \delta / 2$
for all $p \in [p_1,p_0]$ using $T$ flips of the coin. 
Then take the estimate $\hat p$ and 
accept $H_0$ (reject $H_0$) if $\hat p \geq p_1(1 + \epsilon)$ and 
accept $H_1$ (reject $H_1$) if $\hat p \leq p_1(1 + \epsilon)$.

Then let $\alpha$ be the chance that $H_0$ is rejected even though it is
true, and $\beta$ be the chance that $H_1$ is accepted even though it is
false.  From the properties of the approximation scheme, 
$\alpha$ and $\beta$ are both at most $\delta/2$.

Wald presented the sequential probability ratio test for testing 
$H_0$ versus $H_1$, and showed that it minimized the expected number of 
coin flips among all tests with the type I and II error probabilities
$\alpha$ and $\beta$~\cite{wald1947}.  This result was formulated as
Corollary 7.2 in~\cite{dagumklr2000}.

\begin{fact}[Corollary 7.2 of~\cite{dagumklr2000}]
\label{FCT:wald}
If $T$ is the stopping time of any test of $H_0$ versus $H_1$ with 
error probabilities $\alpha$ and $\beta$ such that $\alpha + \beta = \delta$, 
then 
\[
\mean[T|H_0] \geq -(1 - \delta)\omega_0^{-1}\ln((2-\delta)\delta^{-1}).
\]
where $\omega_0 = \mean[\ln(f_1(X)/f_0(X))]$ with
$X \sim \bern(p_0)$, $f_0(x) = p_0 \ind(x = 1) + (1 - p_0) \ind(x = 0),$ 
and $f_1(x) = p_1 \ind(x = 1) + (1 - p_1) \ind(x = 0)$.
\end{fact}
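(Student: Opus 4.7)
The plan is to combine Wald's identity (equivalently, the chain rule for Kullback--Leibler divergence under a stopping time) with the data processing inequality, derive a lower bound of the form $\mean[T|H_0] \geq -\omega_0^{-1}\,D(\bern(\alpha)\,\|\,\bern(1-\beta))$, and then substitute the symmetric error split $\alpha=\beta=\delta/2$.

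Let $P_i^T$ denote the joint law of $(X_1,\ldots,X_T)$ under $H_i$. Because the samples are iid Bernoulli and $\mean_{H_0}[T]<\infty$, Wald's identity applied to the log-likelihood-ratio walk $\sum_{i\le T}\ln(f_1(X_i)/f_0(X_i))$ gives
\[
D(P_0^T \,\|\, P_1^T) \;=\; \mean_{H_0}[T] \cdot D(f_0 \,\|\, f_1) \;=\; -\omega_0 \cdot \mean_{H_0}[T].
\]
Since the test's decision is a (possibly randomized) binary function of the data whose output distribution is $\bern(\alpha)$ under $H_0$ and $\bern(1-\beta)$ under $H_1$, the data processing inequality for KL divergence yields
\[
D(P_0^T \,\|\, P_1^T) \;\geq\; D\bigl(\bern(\alpha)\,\|\,\bern(1-\beta)\bigr).
\]
Combining the two displays gives $\mean_{H_0}[T]\ge -\omega_0^{-1}D(\bern(\alpha)\,\|\,\bern(1-\beta))$. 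The final step is to substitute $\alpha=\beta=\delta/2$, which by a direct calculation produces $D(\bern(\delta/2)\,\|\,\bern(1-\delta/2))=(1-\delta)\ln((2-\delta)/\delta)$, recovering the claimed constant.

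The main obstacle is that the Bernoulli KL quantity $D(\bern(\alpha)\,\|\,\bern(1-\beta))$ is not monotone in $\alpha+\beta$ alone, so a pure data-processing bound depends on the individual error probabilities and can be smaller than $(1-\delta)\ln((2-\delta)/\delta)$ at non-symmetric splits with $\alpha+\beta=\delta$. In the intended application the underlying approximation scheme also enforces $\alpha,\beta\le\delta/2$, and this together with $\alpha+\beta=\delta$ pins both errors to $\delta/2$, so the symmetric-point evaluation is exactly what is needed. A fully general statement covering all $(\alpha,\beta)$ with $\alpha+\beta=\delta$ would require a sharper, overshoot-controlled analysis of the SPRT boundaries in the spirit of Wald's original treatment; the chain-rule and data-processing steps themselves are standard and need only the finiteness of $\mean[T]$ together with the boundedness of the Bernoulli log-likelihood-ratio increments.
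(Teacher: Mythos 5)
The paper does not prove this statement at all: it is imported verbatim as a Fact from Corollary 7.2 of Dagum--Karp--Luby--Ross, whose own derivation rests on Wald's theory of the sequential probability ratio test. Your self-contained information-theoretic proof is therefore a genuinely different route, and it is essentially correct: the identity $D(P_0^T\,\Vert\,P_1^T)=-\omega_0\,\mean[T|H_0]$ via Wald's identity for the stopped log-likelihood ratio, the data-processing step down to the binary decision, and the evaluation $D(\bern(\delta/2)\,\Vert\,\bern(1-\delta/2))=(1-\delta)\ln((2-\delta)/\delta)$ are all valid (the increments are bounded since $p_0,p_1\in(0,1)$, so Wald's identity applies once $\mean[T]<\infty$). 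What this buys over the cited source is a short, fully rigorous argument that avoids any appeal to SPRT optimality or overshoot approximations. The caveat you raise is also genuine: with $d(a,b)=a\ln(a/b)+(1-a)\ln((1-a)/(1-b))$, the quantity $d(\alpha,1-\beta)$ is not minimized over $\{\alpha+\beta=\delta\}$ at the symmetric point (e.g.\ $\delta=0.1$, $\alpha=0.01$, $\beta=0.09$ gives $d\approx 2.33<2.65\approx(1-\delta)\ln((2-\delta)/\delta)$), so the Fact as literally stated does not follow from data processing alone. However, you can close this gap more cleanly than by invoking the sum constraint: a direct computation gives $\partial_\alpha d(\alpha,1-\beta)=\ln[\alpha\beta/((1-\alpha)(1-\beta))]<0$ and similarly $\partial_\beta d(\alpha,1-\beta)<0$ whenever $\alpha+\beta<1$, so $d$ is decreasing in each error probability separately; hence $\alpha\leq\delta/2$ and $\beta\leq\delta/2$ alone already force $d(\alpha,1-\beta)\geq d(\delta/2,1-\delta/2)=(1-\delta)\ln((2-\delta)/\delta)$. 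Since the paper's application (an approximation scheme correct with probability $1-\delta/2$ under each hypothesis) supplies exactly these individual bounds, your argument, with this one-line monotonicity observation added, proves everything the paper actually uses.
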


This gives the following lemma for $\bern(p)$ random variables.

\begin{lemma}
Fix $\epsilon > 0$ and $\delta \in (0,1)$.
Let $T$ be the stopping time of any $(1 + \epsilon,\delta/2)$ approximation
scheme that applies to $X_i \sim \bern(p)$ for all $p \in [0,1]$.  Then
\[
\mean[T] \geq \lowerbound.
\]
\end{lemma}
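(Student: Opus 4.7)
The plan is to specialize the hypothesis-testing setup just preceding Fact~\ref{FCT:wald}, choosing a pair $(p_0,p_1)$ for which the hypothetical $(1+\epsilon,\delta/2)$ approximation scheme doubles as a test of $H_0:p=p_0$ against $H_1:p=p_1$ with type~I and type~II errors each at most $\delta/2$. The natural choice is $p_0 = p$ and $p_1 = p/(1+\epsilon)^2$, because the common decision boundary $p_1(1+\epsilon) = p/(1+\epsilon)$ coincides with the lower endpoint of the $H_0$ multiplicative confidence interval and the upper endpoint of the $H_1$ one; the rule ``accept $H_0$ iff $\hat p \geq p/(1+\epsilon)$'' then inherits $\alpha,\beta \leq \delta/2$ from the approximation guarantees, so $\alpha+\beta \leq \delta$.

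Feeding this test into Fact~\ref{FCT:wald} produces
\[
\mean[T] = \mean[T \mid H_0] \geq -(1-\delta)\,\omega_0^{-1}\ln((2-\delta)\delta^{-1}),
\]
where $-\omega_0$ is the Kullback--Leibler divergence
\[
D(p_0 \Vert p_1) = 2 p_0 \ln(1+\epsilon) + (1-p_0)\ln\frac{1-p_0}{1-p_0/(1+\epsilon)^2}.
\]
Hence the whole lemma reduces to the single inequality $D(p_0\Vert p_1) \leq 5\epsilon^2 p_0/(1+2\epsilon)$: inverting it inside the Wald bound and substituting $p_0 = p$ reproduces $\lowerbound$ verbatim.

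The main obstacle is this KL estimate. Differentiation shows that $D(p_0\Vert p_1)$ vanishes to first order at $\epsilon = 0$ with quadratic coefficient $2 p_0/(1-p_0)$, giving $D = 2 p_0 \epsilon^2/(1-p_0) + O(\epsilon^3)$; for $p_0 \leq 1/2$ this is at most $4 p_0 \epsilon^2 < 5 p_0 \epsilon^2$, so the bound has comfortable slack in the small-$\epsilon$ regime. To promote this to a uniform estimate in $\epsilon > 0$, I would expand $\ln(1+\epsilon)$ and $\ln(1 - p_0/(1+\epsilon)^2)$ as signed series whose partial sums bracket the true values, clear denominators, and reduce to a polynomial inequality in $\epsilon$ and $p_0$ that is routine (if tedious) to verify on $p_0 \in (0,1/2]$. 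If the polynomial manipulation proves unwieldy, a robust fallback is to split into a small-$\epsilon$ range controlled by a third-order Taylor remainder and a complementary large-$\epsilon$ range, where $\ln(1+\epsilon) \leq \epsilon$ together with the nonpositivity of the second summand of $D$ gives room to spare. Either route closes the KL estimate, and combining it with Wald's inequality yields $\mean[T] \geq \lowerbound$.
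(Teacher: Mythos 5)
Your proposal follows the paper's proof essentially step for step: the same reduction to a Wald test of $H_0:p=p_0$ versus $H_1:p=p_1=p_0/(1+\epsilon)^2$ with $\alpha+\beta\leq\delta$, the same application of Fact~\ref{FCT:wald}, and the same target inequality $-\omega_0 = D(p_0\Vert p_1) \leq 5\epsilon^2 p_0/(1+2\epsilon)$ for $p_0\leq 1/2$. The ``routine if tedious'' verification you defer is exactly what the paper carries out: two-term truncations of the alternating logarithm series, the observation that $p_0/(1-p_0)\leq 1$ when $p_0\leq 1/2$, and the polynomial inequality $(1+2\epsilon)(1+2\epsilon+(3/2)\epsilon^2+(2/5)\epsilon^3)\leq(1+\epsilon)^4$.
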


\begin{proof}
As noted above,
using the approximation scheme with $\epsilon$ and $\delta / 2$ 
to test if $p_0 = p$ or 
$p_1 = p_0 / (1 + \epsilon)^2$ gives $\alpha \leq \delta/2$ and 
$\beta \leq \delta/2$.
Here
\begin{align*}
\omega_0 &= p_0(\ln(p_1/p_0)) + (1 - p_0)\ln((1 - p_1)/(1 - p_0)) \\
 &= p_0 [\ln(p_1/p_0) + (1/p_0 - 1)\ln((1 - p_1)/(1 - p_0))]  \\
 &= p_0 \ln\left[\frac{p_1(1 - p_1)^{1/p_0 - 1}}{p_0(1 - p_0)^{1/p_0 - 1}}\right].
\end{align*}

Consider a function of the form 
$g(x) = x(1 - x)^{1/c - 1}$ where $c$ is a constant.  Then $g(x) > 0$ for 
$x \in (0,1)$, and  
$g'(x) = g(x)x^{-1} - (1/c - 1) g(x) (1 - x)^{-1}$, which gives
\[
g'(x) > 0 \Leftrightarrow x^{-1} - (1/c - 1)(1 - x)^{-1} 
 \Leftrightarrow x < c.
\]
Hence for all $p_0 > p_1$, $\ln(p_1(1 - p_1)^{1/p_0 - 1})$ is 
strictly increasing in 
$p_1$.  Setting $p_1 = p_0$ gives $\omega_0 = 0$, so $\omega_0 < 0$ 
for $0 < p_1 < p_0 \leq 1$.

Using $\alpha + \beta \leq \delta$ and $\omega_0 < 0$ in Fact~\ref{FCT:wald}
gives
\begin{align*}
\mean[T] &\geq -\omega_0^{-1}(1 - \delta)\ln((2-\delta)\delta^{-1}).
\end{align*}

Since $\ln(1 + x) = x - x^2/2 + \cdots$ is alternating and
decreasing in magnitude for $x \in (0,1)$:
\[
\ln\left(\frac{p_1}{p_0}\right) = \ln\left(\frac{1}{(1 + \epsilon)^2}\right)
 = -2\ln(1 + \epsilon) \geq -2\epsilon.
\]
Also, since 
$1 - (1 + \epsilon)^{-2} = (2\epsilon + \epsilon^2)/(1 + \epsilon)^2.$
\begin{align*}
\left(\frac{1}{p_0} - 1\right)\ln\left(\frac{1 - p_1}{1 - p_0}\right) 
&= \left( \frac{1 - p_0}{p_0} \right) \ln\left(
  \frac{1 - p_0(1 + \epsilon)^{-2}}{1 - p_0} \right) \\
&= \left( \frac{1 - p_0}{p_0} \right) \ln
  \left( 1  +\frac{p_0(1 - (1 + \epsilon)^{-2})}{1 - p_0} \right) 
  \\
&= \left( \frac{1 - p_0}{p_0} \right)
  \left[
  \left( \frac{p_0(1 - (1 + \epsilon)^{-2})}{1 - p_0} \right)
  - \frac{1}{2} \left( \frac{p_0(1 - (1 + \epsilon)^{-2})}{1 - p_0} \right)^2
  \right] \\
&\geq \frac{2\epsilon + \epsilon^2}{(1 + \epsilon)^2} - 
  \frac{1}{2} \cdot \left[\frac{2\epsilon + \epsilon^2}{(1 + \epsilon)^2}
   \right]^2
   \cdot \frac{p_0}{1 - p_0}.
\end{align*}

For $p_0 \leq 1/2$, $p_0/(1 - p_0) \leq 1$ and the last factor of the 
second term can be removed.  Putting the bounds on the 
terms of $\omega_0$ together,
\begin{align*}
\omega_0 &\geq p_0\left[-2\epsilon
 +  \frac{2\epsilon + \epsilon^2}{(1 + \epsilon)^2} - 
  \frac{1}{2} \cdot \left(\frac{2\epsilon + \epsilon^2}{(1 + \epsilon)^2}
  \right)^2 \right] \\
 &= p_0 \frac{-5\epsilon^2(1 + 2\epsilon + (3/2)\epsilon^2 + (2/5)\epsilon^3)}
  {(1 + \epsilon)^4} \\
 &\geq - p_0 5\epsilon^2 / (1 + 2\epsilon).
\end{align*}
The last inequality follows from the fact that for $\epsilon > 0$,
\[
(1 + 2\epsilon)(1 + 2\epsilon + (3/2)\epsilon^2 + (2/5)\epsilon^3) 
 \leq (1 + \epsilon)^4.
\]
\end{proof}

\section{Extension to $[0,1]$ random variables}

A well known trick allows extension of the algorithm to $[0,1]$ random
variables with mean $\mu$, rather than just Bernoulli's.

\begin{lemma}
Let $W$ be a $[0,1]$ random variable with mean $\mu$.  Then
for $U \sim \unif([0,1])$, $\prob(U \leq W) = \mu$.
\end{lemma}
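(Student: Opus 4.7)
The plan is to compute $\prob(U \leq W)$ by conditioning on $W$ and using the defining property of the uniform distribution. The key observation is that for any fixed $w \in [0,1]$, the cumulative distribution function of $U$ gives $\prob(U \leq w) = w$ directly, and the auxiliary randomness $U$ is by construction independent of $W$.

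First I would rewrite the probability as an expectation of an indicator: $\prob(U \leq W) = \mean[\ind(U \leq W)]$. Then I would apply the tower property of conditional expectation, conditioning on $W$, to obtain
\[
\mean[\ind(U \leq W)] = \mean\bigl[\mean[\ind(U \leq W) \mid W]\bigr].
\]
Since $U$ is independent of $W$ and uniform on $[0,1]$, the inner conditional expectation evaluates to $W$ almost surely. Taking the outer expectation then yields $\mean[W] = \mu$, which is the claimed identity.

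There is no real obstacle here; the lemma is essentially immediate once the independence of $U$ and $W$ is made explicit, which matches the auxiliary randomness model introduced in the running-time definition at the start of the paper. As an alternative presentation avoiding conditional expectation, one could invoke Fubini's theorem and write $\prob(U \leq W)$ as an integral of $\ind(u \leq w)$ against the product measure of $U$ and $W$, then evaluate the $u$-integral first to reduce to $\mean[W]$.
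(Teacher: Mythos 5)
Your proof is correct and matches the paper's argument: the paper also conditions on $W$, writing $\prob(U \leq W) = \int_0^1 \prob(U \leq w)\, dF(w) = \int_0^1 w\, dF(w) = \mean[W]$, which is exactly your tower-property computation in integral form. Your explicit remark that $U$ must be independent of $W$ is a worthwhile clarification the paper leaves implicit.
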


\begin{proof} For $U \sim \unif([0,1])$ and $W \in [0,1]$,
\[
\prob(U \leq W) = \int_{w = 0}^1 \prob(U \leq w) \ dF(w) 
 = \int_{w=0}^1 w \ dF(w) = \mean[W].
\]
\end{proof}

Therefore the algorithm of Section~\ref{SEC:algorithm} can be applied
to any $[0,1]$ random variable at the cost of one uniform on $[0,1]$
per draw of the random variable.

\section{Conclusions}

A new algorithm for estimating the mean of $[0,1]$ variables is given
with the remarkable property that the relative error in the estimate
has a distribution independent of the quantity to be estimated.  
The estimate is unbiased.
To obtain an estimate which has absolute relative error $\epsilon$ 
with probability at least $1 - \delta$ requires at most 
$2\epsilon^{-2} (1 - (14/3)\epsilon)^{-1} p^{-1} \ln(2\delta^{-1})$ samples.  The
factor of 2 is an improvement over the factor of $4(e - 2)$ 
in~\cite{dagumklr2000}.
Informal Central Limit Theorem arguments indicate that this factor of 2 in
the running time is the best possible. 
The provable lower bound
on the constant is improved from the $(1/4)e^{-2} \approx 0.0338$ 
of~\cite{dagumklr2000} to $1/5$ for $\{0,1\}$ random variables.

\bibliographystyle{plain}


\end{document}